\documentclass[11pt]{article}
\usepackage{etoolbox}
\newbool{journal}
\setbool{journal}{false}

\usepackage{stmaryrd, extarrows}
\usepackage{amsthm}
\usepackage{bbm}
\usepackage{mathtools}
\usepackage{csquotes}
\usepackage{tikz}
\usetikzlibrary{positioning, shapes, calc}
\usepackage{subcaption}
\usepackage{comment}
\usepackage{booktabs}
\usepackage{orcidlink}

\usepackage{hyperref} 
\usepackage{breakurl}
\usepackage{cleveref} 

\providecommand{\keywords}[1]{\textbf{\textit{Keywords:}} #1}


\newcommand{\braces}[1]{\lbrace #1 \rbrace}
\DeclareMathOperator{\Sol}{Sol}

\newcommand{\F}{$(F)$ }
\newcommand{\Ft}{$(F')$ }
\newcommand{\Fncspace}{\!\!}
\newcommand{\pref}[1]{(\ref{#1})}

\newtheorem{theorem}[]{Theorem}

\newtheorem{corollary}[]{Corollary}

\usepackage{authblk}


\begin{document}

\title{Fractional Chromatic Numbers \\from Exact Decision Diagrams}

                \author[1]{Timo Brand \orcidlink{0009-0004-3111-2045}}
                \affil[1]{\normalsize Technical University of Munich;
                School of Computation, Information and Technology;
                    Germany, \href{Timo.Brand@tum.de}{Timo.Brand@tum.de}}

                \author[2]{Stephan Held \orcidlink{0000-0003-2188-1559}}
                \affil[2]{\normalsize Research Institute for Discrete Mathematics and Hausdorff Center for Mathematics, University of Bonn,
                  Germany, \href{held@dm.uni-bonn.de}{held@dm.uni-bonn.de}} 

                \maketitle

                \begin{abstract}
                  Recently, Van Hoeve proposed an algorithm for graph
                  coloring based on an integer flow formulation on decision diagrams for stable sets
                  \cite{van2021graph}.  We prove that
                  the solution to the linear flow relaxation on exact decision diagrams determines the fractional
                  chromatic number of a graph.
                  This settles the question whether the decision diagram formulation or the fractional chromatic number establishes
                  a stronger lower bound.
                  It also establishes that the integrality gap of the linear programming relaxation is $\mathcal{O}(\log n)$, where 
                  $n$ represents the number of vertices in the graph.
                  
                  We also conduct experiments using exact
                  decision diagrams and could
                  determine the chromatic number of \texttt{r1000.1c}
                  from the DIMACS benchmark set.
                  It was previously unknown and is one of the few
                  newly solved DIMACS instances in the last 10 years.
                \end{abstract}

        \keywords{Graph coloring, Decision diagrams, Integer programming}

        \section{Introduction}\label{sec:introduction}

        A (vertex) coloring of a graph $G=(V,E)$ assigns  a color to each vertex such that adjacent vertices have different colors.
        Thus, each set of vertices with the same color is a \textit{stable set} (also called \textit{independent set}) in $G$.
        The \textit{(vertex) coloring problem} is to compute a coloring with the minimum possible number of colors.
        This number is denoted by $\chi(G)$ and also  called the \textit{chromatic number} of $G$.

        Let $\mathcal{S}$ denote the set of all \textit{stable sets} in $G$.
        Then, solving the following   integer programming model yields the chromatic number \cite{mehrotra1996column}:
        \begin{equation}
          \label{VCIP}
          \begin{aligned}
            \chi(G) = \; &\text{min}  \sum_{S\in \mathcal{S}} z_S \\
            &\text{s.t.} \;\;  \sum_{S\in \mathcal{S}: j \in S} z_i \geq 1 \quad  \forall j \in V\\
            & z_S\in \lbrace 0, 1 \rbrace \quad \forall S \in \mathcal{S}\\
          \end{aligned}
          \tag{VCIP}
        \end{equation}

        It is a special case of the  \textit{set cover problem}, where the vertex set  $V$ has to be covered with a minimum number of stable sets.
        The linear relaxation  results in the \textit{fractional chromatic number} $\chi_f(G)$.

        \begin{equation}
          \label{VCLP}
          \begin{aligned}
            \chi_f(G) := \; &\text{min}  \sum_{S\in \mathcal{S}} z_I \\
            &\text{s.t.} \;\;  \sum_{S \in \mathcal{S}:j\in S}z_S  \geq 1 \quad  \forall j \in V\\
            & 0 \leq z_S \leq 1 \quad \forall S \in \mathcal{S}.\\
          \end{aligned}
          \tag{VCLP}
        \end{equation}

        Lov{\'a}sz  showed that $\chi(G)\le \mathcal{O}(\log n)\cdot\chi_f(G)$ \cite{lovasz1975ratio}.
        However for any  $\epsilon > 0$, approximating either $\chi_f(G)$ or
        $\chi(G)$ within a factor $n^{1-\epsilon}$ is NP-hard
        \cite{zuckerman2007linear}.
        The IP formulation (\ref{VCIP}) is the basis for several  branch-\&-price algorithms for vertex coloring
        \cite{mehrotra1996column,malaguti-monaci-toth,held2012maximum}.

        Recently, Van Hoeve \cite{van2021graph} proposed
        to tackle the graph coloring problem with decision diagrams.
        A (binary) decision diagram consists of an acyclic digraph with arc labels.
        It can represent the set of feasible solutions to an optimization problem $P$,
        e.g.\ the stable sets of a graph. They are called exact if they represent all solutions, e.g.\ all
        stable sets.
        Van Hoeve showed how to compute a graph coloring  using a constrained network  flow in the decision diagram.

        \subsection{Contributions}
        We show that the linear relaxation of the integral network flow in an exact decision diagram
        describes the fractional chromatic number, i.e.\ the linear relaxations arising
        from (\ref{VCLP}) and from exact decision diagrams lead to the same lower bound.
        Thus, fractional flows in relaxed decision diagrams provide fast lower bounds for
        the (fractional) chromatic number.

        Finally, we show that the exact decision diagrams are computationally
        efficient on dense instances.
        For the first time, we are able to  compute the chromatic number of \texttt{r1000.1c}
        \cite{johnson1996cliques}.
        To provide a provably correct solution, we employ exact arithmetic
        using SCIP-exact \cite{eifler2023computational}.
        In unsafe floating point arithmetic,
        we could also improve the best known lower bound of the instance \texttt{DJSC500.9}.

        The paper is organized as follows. In
        Section~\ref{sec:decision_diagrams}, we shortly describe
        decision diagrams for the stable set problem and how a graph
        coloring integer program based on such a decision diagram can
        be formulated (Section~\ref{sec:coloring-flow-ip}) as proposed by
        \cite{van2021graph}.  Then, in Section~\ref{sec:linear_relax}
        we prove that the solution to the linear relaxation of the
        integer program determines the fractional chromatic number.
        Section~\ref{sec:results_instance} contains experimental results with exact arithmetic
        on the instances of the DIMACS benchmark set, followed by Conclusions.

        \section{Decision Diagrams}
        \label{sec:decision_diagrams}
        Here, we recap how stable sets can be represented through decision diagrams as proposed in \cite{bergman2014optimization,van2021graph}.
        See also van Hoeve's recent tutorial on decision diagrams for optimization \cite{van2024introduction}.
        Consider the decision variables $X = \braces{x_1, x_2, \ldots, x_n}\in \{0,1\}^n$
        of an optimization problem $P$. 
        Decision diagrams are a possible way  to represent the solution space $\Sol(P)$.

        A \textit{decision diagram} consists of a  layered directed acyclic graph $D = (N, A)$.
        $D$ has $n+1$ layers $L_1, \ldots, L_{n+1}$, where $N = L_1\dot{\cup}\dots\dot{\cup}L_{n+1}$.
        Arcs  go only from nodes in one layer to nodes in the next layer.
        The first layer $L_1$ consists of a single \textit{root node} $r$. Likewise,
        the last layer $L_{n+1}$ consists only of a single \textit{terminal node} $t$.
        A layer $L_j$ ($1\le j\leq n$) is a collection of nodes of $D$. Layer $j$ is  associated
        with the decision variable $x_j\in X$.
        For a node $u\in L_j$ we denote its layer $j$ by $L(u)$.

        Furthermore, the decision diagram has arc labels $l:A \to \{0,1\}$.
        Arcs are called  $0$-arcs or  $1$-arcs depending on their labels.
        A label encodes if the decision variable of the head level is set to 0 or 1,
        as we will describe later.
        For each node, except the terminal node, we have exactly one outgoing $0$-arc
        and at most one outgoing $1$-arc.

        Each node and each arc must belong to a path from $r$ to $t$.
        Given the arcs $(a_1, a_2, \ldots, a_n)$ of an $r$-$t$ path,
        we can define a variable assignment of $X$ by setting $x_j = l(a_j)$ for $j=1, \ldots, n$.
        $\Sol(D)$ denotes the collection of variable assignments obtained by all $r$-$t$ paths.
        A decision diagram $D$ for problem $P$ is called \textit{exact}
        if $\Sol(P) = \Sol(D)$ and \textit{relaxed} if $\Sol(P) \subseteq \Sol(D)$.

        Systematic ways to compute such decision diagrams can be found in \cite{bergman2014optimization}.

        \subsection{Decision Diagrams for Stable Sets}
        Let $G = (V,E)$ be a graph with vertex set $V= \{v_1,\dots, v_n\}$.
        To encode the  stable set problem, $X$ contains a  decision variable for
        each vertex in $V$.
        Now, an (exact) decision diagram $D$ with labels $l$ for the stable set problem on  $G$
        consists  of $n+1$ layers.
        For each stable set $S$ in $G$, there is exactly one path $(a_1,\dots, a_n)$ in $D$
        with $l(a_i) = \mathbbm{1}_S(v_i)$ and vice versa, where $\mathbbm{1}_S$ is the incidence vector of $S$.

        \Cref{ddexample} shows a graph $G$ on 4 vertices and an
        exact decision diagram with 5 layers representing all stable sets.
        The top layer contains the root $r$ and the bottom layer the terminal $t$.
        1-arcs are drawn as solid lines and 0-arcs as dashed lines.

        Notice how the $1$-arcs on a path from the root to the terminal
        correspond to the vertices of a stable set in the graph. 
        Likewise for each stable set in $G$ we can find a corresponding path in $D$.
        Thus,  $\Sol(P) = \Sol(D)$.

        The node labels in the decision diagram show the set of
        \textit{eligible vertices} at each node $v\in N$, i.e.\ the vertices
        that can be still be added individually to the stable sets
        corresponding to the $r$-$v$ sub-paths.

        {
          \newcommand{\shiftvalue}{0.5cm}
          
          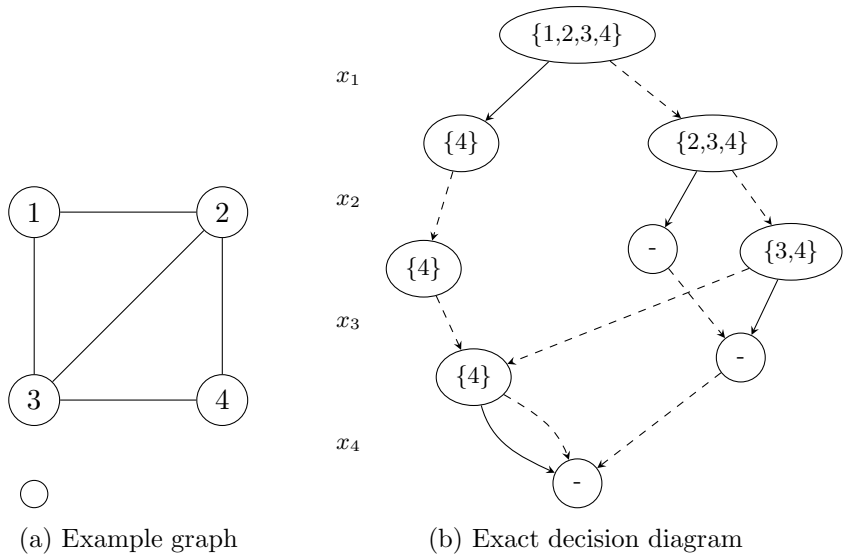
\begin{figure}[tb]
            
            \subcaptionbox{Example graph\label{ddexampleA}}[0.4\linewidth]{
              \vspace{0cm}
              \begin{tikzpicture}[every node/.style={circle, draw}, scale = 2.5]
                \node[opacity = 0] {};
                \begin{scope}[yshift = 0.5cm]
                  \node (1) at (0,1) {1};
                  \node (2) at (1,1) {2};
                  \node (3) at (0,0) {3};
                  \node (4) at (1,0) {4};
                \end{scope}

                \draw (1) -- (3)
                (3) -- (4)
                (4) -- (2)
                (2) -- (1)
                (3) -- (2);
              \end{tikzpicture}
            }
            \subcaptionbox{Exact decision diagram\label{ddexampleB}}[0.54\linewidth]{
              \begin{tikzpicture}[node distance=0.9cm and 0.45cm, scale = 1]\footnotesize
                \begin{scope}[local bounding box=graph]
                  \tikzset{%
                    zeroarrow/.style = {-stealth,dashed},
                    onearrow/.style = {-stealth,solid},
                    c/.style = {ellipse,draw,solid,minimum width=2em,
                      minimum height=2em},
                    r/.style = {rectangle,draw,solid,minimum width=2em,
                      minimum height=2em}
                  }

                  \node[c] (A1) {\{1,2,3,4\}};
                  \node[c] (B1) [below left=of A1] {\{4\}};
                  \node[c] (B2) [below right=of A1] {\{2,3,4\}};

                  \node[c] (C2) [below=of B1, xshift=-\shiftvalue] {\{4\}};
                  \node[c] (C3) [below left=of B2, xshift=\shiftvalue] {-};
                  \node[c] (C4) [below right=of B2, xshift=-\shiftvalue] {\{3,4\}};

                  \node[c] (D1) [below left=of C4, xshift=\shiftvalue] {-};
                  \node[c] (D2) [below right=of C2, xshift=-\shiftvalue] {\{4\}};

                  \node[c] (E1) [below=5.25cm of A1] {-};

                  \draw[onearrow] (A1) -- (B1);
                  \draw[zeroarrow] (A1) -- (B2);

                  \draw[zeroarrow] (B1) -- (C2);
                  \draw[onearrow] (B2) -- (C3);
                  \draw[zeroarrow] (B2) -- (C4);

                  \draw[zeroarrow] (C2) -- (D2);

                  \draw[zeroarrow] (C3) -- (D1);
                  \draw[onearrow] (C4) -- (D1);
                  \draw[zeroarrow] (C4) -- (D2);

                  \draw[zeroarrow] (D1) -- (E1);
                  \draw[onearrow] (D2) to [out=285, in=155] (E1);
                  \draw[zeroarrow] (D2) to [out=330, in=110] (E1);

                \end{scope}
                \foreach \X in {1,2,3, 4}
                         {\node at ([xshift=-0.5cm, yshift=4cm - 1.625*\X cm]graph.west) {$x_{\X}$}; }
              \end{tikzpicture}
            }
            \caption[An example graph and its corresponding exact decision diagram]{
              An example graph and its exact decision diagram.
            }\label{ddexample}
          \end{figure}
        }

        In the stable set case, nodes in a common layer with the same set of eligible vertices are
        called \textit{equivalent}. For general decision diagrams, two nodes $v,v'$ are equivalent if the two subgraphs induced by all $v$-$t$-paths and all $v'$-$t$-paths are isomorphic. For the stable set problem both notions are equivalent.
        Equivalent nodes can be contracted into a single node on
        that layer.
        This is used to reduce the size of a decision diagram for the stable set problem.
        Decision diagrams without equivalent nodes are called \textit{reduced decision diagrams}.

        Bergman et al. (\cite{bergman2014optimization}, Algorithm 1) proposed a top-down
        compilation technique to compute the exact reduced decision diagram $D$ for
        stable sets.

        Decision diagrams can have an exponential size.  They can be
        computed efficiently if the number of nodes per layer is small
        after contracting equivalent nodes, which is often the
        case for dense graphs.

       \subsection{Graph Coloring from Stable Set Decision Diagrams}
       \label{sec:coloring-flow-ip}
        For graph coloring, Van Hoeve \cite{van2021graph} proposed the  following  constrained integral network ﬂow problem \F on a stable set decision diagram $D = (N, A)$:
        \begin{align}
          (F) = \;\text{min } & \sum_{a \in \delta^{+}(r)} y_{a}          \label{flow:1}\\
          \quad \text{s.t. } & \sum_{a=(u, v) \mid L(u)=j, \ell(a)=1} y_{a} \geq 1
          & \forall j \in V               \label{flow:2}\\
          & \sum_{a \in \delta^{-}(u)} y_{a}-\sum_{a \in \delta^{+}(u)} y_{a}=0
          & \forall u \in N\setminus\{r,t\}               \label{flow:3}\\
          & y_{a} \in \lbrace 0,\ldots, n \rbrace & \forall a\in A        \label{flow:4}
        \end{align}
        The constraints \pref{flow:1}--\pref{flow:4} encode an integral
        $r$-$t$-flow that implicitly covers each original vertex
        through an activating arc \pref{flow:2}.
        The flow can be decomposed into paths, which correspond to stable sets.
        Minimizing the flow value corresponds to minimize the number of paths and, thus,
        to minimizing the number of stable sets in a stable set cover.

        Van Hoeve \cite[Theorem 2]{van2021graph} shows that \pref{flow:1}--\pref{flow:4} computes the chromatic number, if
        $D$ is an exact decision diagram.%
        \footnote{Formally, Van Hoeve proved this
        for the partitioning formulation of $(F)$, where \pref{flow:2}
        are equality constraints. Both formulations  are equivalent and he also uses
        the covering formulation in his implementation.}

        Relaxed decision diagrams might additionally contain paths  $(a_1,\dots, a_n)$ that do not represent stable sets.

        The main emphasis of Van Hoeve's work \cite{van2021graph} is  an iterative method
        to compute the chromatic number based on relaxed decision diagrams.
        While exact decision diagrams can have an exponential size,
        he begins with a relaxed decision diagram that approximates $\Sol(P)$.
        Solving the network flow problem \F on this relaxation yields a lower bound
        to the chromatic number and enough information to refine
        the relaxed decision diagram to become a better approximation.
        The refinement continues until an optimum coloring is found (or a time limit is reached).
        He also provides a class of instances where the refinement approach ends with a  
        polynomial size decision diagram and where an exact decision diagram has exponential size
        \cite[Theorem 7]{van2021graph}.
        Van Hoeve was able to report a new lower bound of 145 for instance \texttt{C2000.9}.

        \section{The Fractional Chromatic Number and Decision Diagrams}
        \label{sec:linear_relax}

        In this section we prove our main observation. For exact decision
        diagrams, the fractional chromatic number $\chi_f(G)$ is
        determined by the linear relaxation $(F')$ of $(F)$,  where the
        linear relaxation $(F')$ is defined as
        \begin{align}
          (F') = \;\text{min } & \sum_{a \in \delta^{+}(r)} y_{a}         \label{flowLP:1}\\
          \quad \text{s.t. } & \sum_{a=(u, v) \mid L(u)=j, \ell(a)=1} y_{a} \geq 1
          & \forall j \in V               \label{flowLP:2}\\
          & \sum_{a \in \delta^{-}(u)} y_{a}-\sum_{a \in \delta^{+}(u)} y_{a}=0
          & \forall u \in N\setminus\{r,t\}               \label{flowLP:3}\\
          & 0 \leq  y_{a} \leq n & \forall a\in A         \label{flowLP:4}
        \end{align}
        We have the following result.
        \begin{theorem}\label{thm:relaxation_characterization}
          Let $G=(V,E)$ be a graph and $D=(N,A)$  an exact stable set decision diagram for $G$.
          Then the linear relaxation $(F')$ is equivalent to \pref{VCLP}.
        \end{theorem}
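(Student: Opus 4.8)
The plan is to prove that the two linear programs attain the same optimal value by exhibiting, in each direction, a feasible point of one whose objective matches a feasible point of the other; since the optimum of \pref{VCLP} is by definition $\chi_f(G)$, this yields the claimed equivalence. The bridge is the defining property of an exact decision diagram: there is a bijection between the $r$-$t$ paths of $D$ and the stable sets $\mathcal{S}$ of $G$, under which a path traverses a $1$-arc in layer $j$ exactly when $v_j$ lies in the corresponding stable set. Before the two directions, I would dispose of the box constraint $z_S \le 1$ in \pref{VCLP}, arguing it is inactive at the optimum: if a feasible $z$ has $z_{S_0} > 1$, then lowering $z_{S_0}$ to $1$ keeps every covering constraint satisfied, because for each $v_j \in S_0$ the quantity $\sum_{S \ni v_j} z_S$ was at least $z_{S_0} > 1$ and therefore remains at least $1$ after subtracting $z_{S_0}-1$, while the constraints for $v_j \notin S_0$ are untouched. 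As this only decreases the objective, \pref{VCLP} attains its value with all $z_S \le 1$, so I may treat that bound as inactive.

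For the inequality $\text{opt}(F') \le \chi_f(G)$, I would take an optimal $z$ for \pref{VCLP} and, for each stable set $S$ with $z_S > 0$, send $z_S$ units of flow along its unique associated $r$-$t$ path $P_S$, i.e.\ set $y_a = \sum_{S : a \in P_S} z_S$. Flow conservation \pref{flowLP:3} holds since each $P_S$ conserves flow at every interior node; the out-flow from $r$ equals $\sum_S z_S$ because each $P_S$ uses exactly one arc of $\delta^+(r)$; and the total $1$-arc flow in layer $j$ equals $\sum_{S \ni v_j} z_S \ge 1$, giving \pref{flowLP:2}. Finally $y_a \le \sum_S z_S = \chi_f(G) \le \chi(G) \le n$, so \pref{flowLP:4} holds and $y$ is feasible with objective $\chi_f(G)$.

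For the reverse inequality I would start from an optimal flow $y$ of $(F')$. Because $D$ is acyclic, the flow-decomposition theorem writes $y$ purely as a nonnegative combination of $r$-$t$ paths, with no cycles, say weights $\lambda_i$ on paths $P_i$ so that $y_a = \sum_{i : a \in P_i} \lambda_i$ and the objective equals $\sum_i \lambda_i$. Mapping each $P_i$ to its stable set $S_i$ and setting $z_S = \sum_{i : S_i = S} \lambda_i$ gives $\sum_S z_S = \sum_i \lambda_i$, and the covering constraint follows as before since the $1$-arc flow in layer $j$ equals $\sum_{S \ni v_j} z_S \ge 1$. After applying the reduction above to enforce $z_S \le 1$, this is a feasible point of \pref{VCLP} of no larger objective, so $\chi_f(G) \le \text{opt}(F')$, and the two values coincide.

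The step I expect to be most delicate is the treatment of the box constraints. A single arc of $D$ may carry up to $n$ units of flow, so the path decomposition can produce weights, hence values $z_S$, exceeding $1$; the equivalence would fail without the observation that neither the bounds $z_S \le 1$ nor $y_a \le n$ bind at an optimum. Once this is settled and the path/stable-set bijection of the exact diagram is in hand, the remainder is routine bookkeeping.
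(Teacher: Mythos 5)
Your proof is correct and follows essentially the same route as the paper's: both directions transform solutions via the bijection between $r$-$t$ paths of the exact diagram and stable sets, and both use path decomposition of the flow for the reverse direction. The only (minor) difference lies in verifying the box constraints: the paper takes a \emph{basic} optimal solution of \pref{VCLP} (at most $n$ positive $z_S$, each at most $1$) and asserts $z\le 1$ ``by optimality'' in the reverse direction, whereas you bound $y_a$ by the total flow value $\chi_f(G)\le n$ and explicitly truncate $z_S$ at $1$ --- both are valid, and your treatment of this point is, if anything, slightly more careful.
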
%
        \begin{proof}
          We show how optimum solutions can be transformed between the two problem formulations.
          
          Let  $(z_S)_{S\in \mathcal{S}}$ be an optimum basic solution of \pref{VCLP}.
          We transform it into a solution $y_a, a \in A$ of \Ft with the same objective value..

          For each $S \in \mathcal{S}$ with  $z_S > 0$, choose a path  $(a_1,\dots, a_n)$ 
          in the decision diagram, where $l(a_j) = 1$ if $j \in S$ and $l(a_j) = 0$ if $(j\not \in S)$.
          Since $S$ is a stable set and $D$ is exact, such a path must exist.
          Increase the flow along this path by $z_i$.
          
          As $z$ is an optimum basic solution, it uses at most $n$ stable sets with positive value.
          Since $0\le z\le 1$,  constraints \pref{flowLP:4} are satisfied.
          As we always augment the flow along $r$-$t$-paths, the flow condition \pref{flowLP:3} is also fulfilled.
          For each vertex $j\in V$ we have $\sum_{S\in \mathcal{S}:j\in S} z_S  \geq 1$.
          This implies  that at least one unit of flow  is sent through $1$-arcs
          in layer $j$ of the decision diagram, thus the solution satisfies \pref{flowLP:2} as well
          and is a valid solution of \Ft\Fncspace.
          By construction the objective values of both solutions coincide.

          For the other direction, let  $(y_{a})_{a\in A}$ be an optimum fractional solution of \Ft.
          It is an $r$-$t$ network flow that can be decomposed into flows along  $r$-$t$-paths,
          where no path is repeated.
          Let $P_1, \ldots, P_k$ be such a decomposition.
          Each path $P_i$ $(i\in[k])$ corresponds to a unique  stable set $S_i\in \mathcal{S}$ since $D$ is an exact decision diagram.
          We construct a solution $z$ of \pref{VCLP} by setting $z_{S_i}$ to the of flow sent along  path $P_i$ for $i \in [k]$ and $z_S=0$ for all stable sets that are not represented in the decomposition.
          By optimality,  $z \le 1$ and the variable bounds are satisfied.
          By \pref{flowLP:2}, the amount of flow on $1$-arcs in layer $j$ is at least $1$.
          This implies that for each vertex $j \in  V$, $\sum_{S\in \mathcal{S}:j\in S} z_S  \geq 1$, and $z$ is a feasible solution
            to \pref{VCLP} with the same objective value as $y$.

            We conclude that for an exact decision diagram of a graph $G$, \Ft
            computes the fractional chromatic number $\chi_f(G)$.
        \end{proof}

        From Theorem~\ref{thm:relaxation_characterization}, we can also conclude that the linear program \Ft
        always has an optimum solution with a polynomial-size support, given an exact decision diagram.

        \begin{corollary}
          Let $G=(V,E)$ be a graph and $D=(N,A)$  an exact stable set decision diagram for $G$,
          there is an optimum solution $y$ to \Ft with $|\{a \in A, y_a > 0\}| \le n^2$, where $n=|V|$.
        \end{corollary}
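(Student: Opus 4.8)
The plan is to reuse the transformation from the proof of Theorem~\ref{thm:relaxation_characterization}, which turns an optimum basic solution of \pref{VCLP} into an optimum solution of \Ft\Fncspace, and then to bound the size of the resulting support by counting arcs path by path.

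First I would establish that \pref{VCLP} admits an optimum solution whose support contains at most $n$ stable sets. The covering constraints of \pref{VCLP} number exactly $n$, one per vertex, so a basic feasible solution of the underlying covering LP has at most $n$ strictly positive variables, since at a vertex the number of positive coordinates cannot exceed the number of rows. The only subtlety is the presence of the upper bounds $z_S \le 1$: I would argue that they are never binding at an optimum. Indeed, any feasible point with $z_S > 1$ for some $S$ can be reduced to $z_S = 1$ without violating feasibility, because every covering constraint involving $S$ then still receives at least one unit of weight from $z_S$ alone, while the objective strictly decreases. Hence the upper bounds can be dropped without changing the optimum, and a basic optimum of the resulting covering LP, which has only $n$ structural rows, has support at most $n$. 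This is precisely the fact already invoked inside the proof of Theorem~\ref{thm:relaxation_characterization}.

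Next I would apply the forward direction of that proof to such a solution $z$. For each of the at most $n$ stable sets $S$ with $z_S > 0$, the construction augments the flow $y$ along a single $r$-$t$ path $P_S$ in $D$. Because $D$ has $n+1$ layers and arcs connect only consecutive layers, every $r$-$t$ path uses exactly $n$ arcs. Therefore the set of arcs carrying positive flow is contained in the union $\bigcup_{S:\,z_S>0} P_S$ of at most $n$ paths with $n$ arcs each, which gives $|\{a\in A : y_a>0\}| \le n\cdot n = n^2$. By Theorem~\ref{thm:relaxation_characterization} this $y$ is an optimum solution of \Ft\Fncspace, yielding the claim.

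The only genuine obstacle is the upper-bound argument in the first step, i.e.\ verifying that restricting to $z_S\le 1$ does not force additional positive variables into a basic optimum. Once the optimum is known to be attained with support at most $n$ stable sets, the arc count follows immediately from the fixed path length $n$.
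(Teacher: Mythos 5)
Your proposal is correct and follows essentially the same route as the paper: transform a basic optimum of \pref{VCLP} with at most $n$ positive variables into a flow supported on at most $n$ many $r$-$t$ paths of $n$ arcs each. The only difference is that you explicitly justify why the upper bounds $z_S\le 1$ do not inflate the support of a basic optimum, a detail the paper's one-line proof leaves implicit; this is a welcome clarification but not a different argument.
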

        \begin{proof}
          The transformation of a basic optimum solution of \pref{VCLP}
          results in an optimum solution $y$ that consists of at most $n$
          paths, where each path has at most $n$ edges.
        \end{proof}

        As the underlying decision diagram and, thus, the  support of a basic optimum solution to \Ft can have exponential size, the transformation of a solution $y$ for
        \Ft might  yield an optimum solution $z$ for \pref{VCLP} that is not a basic solution.

        A nice property of \Ft is that the formulation provides a lower bound
        on $\chi_f(G)$ for each relaxed decision diagram. Good bounds might be
        easier to compute in practice than for \pref{VCLP}, where
        the branch-\&-price algorithm can take many iterations with
        maximum-weight stable set problems in the pricing problem.

        Our result also shows that the (fractional) solution to \Ft of a decision diagram cannot
        improve upon the fractional chromatic number.
        
        Lov{\'a}sz  showed that the ratio between
        $\chi(G)$ and $\chi_f(G)$ is  $\mathcal{O}(\log n)$ \cite{lovasz1975ratio}.
        With this, we obtain the following corollary:
        \begin{corollary}\label{corollary}
          Given an exact decision diagram,
          the (integrality) gap between \F and \Ft is $\mathcal{O}(\log  n)$.
        \end{corollary}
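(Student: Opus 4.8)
The plan is to reduce the statement to a ratio of two quantities we have already identified. Recall that Van Hoeve \cite[Theorem 2]{van2021graph} establishes that the optimum value of the integer program \F on an exact decision diagram equals the chromatic number $\chi(G)$, while Theorem~\ref{thm:relaxation_characterization} shows that the optimum value of its linear relaxation \Ft equals the fractional chromatic number $\chi_f(G)$. Consequently the integrality gap between \F and \Ft is exactly the ratio $\chi(G)/\chi_f(G)$, and proving the corollary amounts to bounding this ratio.

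First I would record the trivial lower bound $\chi_f(G)\le\chi(G)$: any proper coloring of $G$ into $\chi(G)$ color classes is a collection of stable sets covering $V$, hence a feasible $0/1$ solution of \pref{VCLP}, so the optimum fractional value cannot exceed $\chi(G)$. This shows the gap is at least $1$. For the upper bound I would invoke Lov{\'a}sz's inequality $\chi(G)\le\mathcal{O}(\log n)\cdot\chi_f(G)$ \cite{lovasz1975ratio}. Dividing by $\chi_f(G)>0$ yields $\chi(G)/\chi_f(G)=\mathcal{O}(\log n)$, and combining with the lower bound gives the claimed $\mathcal{O}(\log n)$ integrality gap.

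The argument is essentially a bookkeeping step, so I do not expect a genuine obstacle. The only point requiring care is the correct identification of the two optimum values with $\chi(G)$ and $\chi_f(G)$ respectively: the former relies on \F being posed on an \emph{exact} decision diagram (Van Hoeve's theorem fails for relaxed diagrams), and the latter is exactly the content of Theorem~\ref{thm:relaxation_characterization}. Once both identifications are in place, the corollary follows directly from the cited ratio bound.
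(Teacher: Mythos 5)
Your argument is correct and is exactly the reasoning the paper relies on (implicitly, since the corollary is stated without a written proof): identify the optimum of \F with $\chi(G)$ via Van Hoeve's Theorem~2, the optimum of \Ft with $\chi_f(G)$ via Theorem~\ref{thm:relaxation_characterization}, and apply Lov\'asz's $\mathcal{O}(\log n)$ ratio bound. The added remark that the gap is at least $1$ is a harmless extra.
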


        \section{Computational Results with Exact Arithmetic}
        \label{sec:results_instance}

        We re-implemented and essentially verified the experimental
        results of Van Hoeve~\cite{van2021graph} using relaxed decision diagrams.
        Van Hoeve also showed for which instances
        the chromatic number can be computed efficiently within 1 hour
        of running time solving the ILP $(F)$ for the exact decision diagram.
        We repeated this experiment with our implementation,
        available at \url{https://github.com/trewes/ddcolors}.
        Corresponding scripts, logfiles and instructions for the experiments
        are archived at \cite{FK2/ZE9C3L_2024}.

        We ran our experiments on an AMD EPYC 7742 processor.
        The size  of the exact decision diagrams was limited to two  million nodes.
        To obtain numerically safe results, we used the exact ILP solver SCIP-exact
        for solving  $(F)$ and writing certificates~\cite{eifler2023computational}.
        The running time of SCIP-exact was limited to one hour.

        The results of these experiments are reported in \Cref{tab:results}
        for those DIMACS instances where the exact decision diagram was built with at most two million nodes.
        The columns under ``EDD'' show the ``size'' of the  exact decision diagrams and the running ``time'' to compute the decision diagram as well as computing an upper bound with the DSATUR heuristic \cite{brelaz:79}. 
        The column ``SCIP'' shows  the running time of SCIP-exact, where ``-'' stands for a timeout. The columns ``lb'' and ``ub'' show the
        lower and upper bounds reported by SCIP or DSATUR. They are in bold face if they reflect the chromatic number.
        The last row in the right column shows the total number of DIMACS instances, the number of instances for which the exact decision diagram
        could be computed, and the number of instances that could be solved using SCIP-exact.

        Observing good performance on \texttt{r1000.1c} and \texttt{DSJC500.9}, we further investigated these instances.
        We were able to compute the chromatic number of the DIMACS benchmark instance \texttt{r1000.1c}.
        Its decision diagram has 1,228,118 nodes, which is just above the limit of 1 million chosen by Van Hoeve.
        It takes SCIP-exact 3 hours and 11 minutes to determine $\chi(\text{\texttt{r1000.1c}}) = 98$.

        SCIP-exact was unable to improve the lower bound of \texttt{DSJC500.9},
        but CPLEX 12.6 reports a lower bound of $123.0013$ after running for 31 minutes using 4 threads,
        thus indicating $\chi(\text{\texttt{DSJC500.9}}) \geq 124 = \lceil 123.0013 \rceil$.
        Since CPLEX is not guaranteed to be numerically exact,
        we let CPLEX continue until a lower bound of $123.2121$ was found to increase the confidence in the result.

        \section{Conclusions}

        We showed that the fractional flow formulation for graph coloring
        applied to exact decision diagrams, introduced by Van Hoeve~\cite{van2021graph},
        determines the fractional chromatic number of a graph.
        The fractional lower bounds from relaxed decision diagrams presented in
        \cite{van2021graph} are thus lower bounds for the fractional chromatic number.
        It is an interesting alternative to using the set cover
        formulation, as it can be faster to compute on certain instances
        and solve instances that other approaches cannot.
        We used it to compute the chromatic number of \texttt{r1000.1c} for the first time
        and find an improved  (numerically unsafe) lower bound for \texttt{DSJC500.9}.


        \begin{table}
            \caption{\footnotesize Performance of SCIP-exact on the instances where the exact decision diagram
              is built within the limit of two million nodes, using a one hour time limit for SCIP-exact. Running times are reported in seconds.}
            \label{tab:results}
            \setlength{\tabcolsep}{2.4pt}
            \scriptsize      
            \begin{tabular}[t]{@{\extracolsep{\fill}}lrrrcr}
            \toprule
            Instance & lb & ub & \multicolumn{2}{c}{~\qquad EDD} & SCIP \\
             &  &  & size & time &  time \\
            \midrule
            1-FullIns\_3 & \textbf{4} & \textbf{4} & 748 & 0.1 & 0.2 \\
            1-FullIns\_4 & 3 & 5 & 455,895 & 4.0 & - \\
            1-Insertions\_4 & 2 & 5 & 184,070 & 1.2 & - \\
            2-FullIns\_3 & \textbf{5} & \textbf{5} & 12,867 & 0.1 & 10.4 \\
            2-Insertions\_3 & 3 & 4 & 2,964 & 0.1 & - \\
            3-FullIns\_3 & 5 & 6 & 435,083 & 7.7 & - \\
            3-Insertions\_3 & 2.67 & 4 & 23,180 & 0.1 & - \\
            4-Insertions\_3 & 2 & 4 & 178,377 & 2.0 & - \\
            anna & 7 & 11 & 1,033,676 & 42.9 & - \\
            david & \textbf{11} & \textbf{11} & 37,030 & 0.1 & 76.8 \\
            DSJC125.5 & 15.73 & 21 & 668,423 & 3.4 & - \\
            DSJC125.9 & \textbf{44} & \textbf{44} & 9,869 & 0.1 & 2.6 \\
            DSJC250.9 & \textbf{72} & \textbf{72} & 80,682 & 0.1 & 176.5 \\
            DSJC500.9 & 122.48 & 161 & 794,089 & 0.9 & - \\
            DSJR500.1c & \textbf{85} & \textbf{85} & 145,777 & 0.1 & 11.7 \\
            fpsol2.i.1 & \textbf{65} & \textbf{65} & 8,296 & 0.1 & 0.9 \\
            fpsol2.i.2 & \textbf{30} & \textbf{30} & 10,168 & 0.1 & 1.3 \\
            fpsol2.i.3 & \textbf{30} & \textbf{30} & 10,258 & 0.1 & 1.3 \\
            huck & \textbf{11} & \textbf{11} & 1,078 & 0.1 & 0.3 \\
            inithx.i.1 & \textbf{54} & \textbf{54} & 15,805 & 0.5 & 1.8 \\
            inithx.i.2 & \textbf{31} & \textbf{31} & 24,589 & 0.3 & 4.6 \\
            inithx.i.3 & \textbf{31} & \textbf{31} & 24,551 & 0.3 & 4.3 \\
            jean & \textbf{10} & \textbf{10} & 5,252 & 0.1 & 1.3 \\
            miles1000 & \textbf{42} & \textbf{42} & 8,032 & 0.1 & 1.4 \\
            miles1500 & \textbf{73} & \textbf{73} & 4,008 & 0.1 & 0.4 \\
            miles250 & \textbf{8} & \textbf{8} & 2,813 & 0.1 & 0.9 \\
            miles500 & \textbf{20} & \textbf{20} & 15,273 & 0.1 & 8.8 \\
            \bottomrule
            \end{tabular}%
            \begin{tabular}[t]{lrrrcr@{}}
            \toprule
            Instance & lb & ub & \multicolumn{2}{c}{~\qquad EDD} & SCIP \\
             &  &  & size & time &  time\\
            \midrule
            miles750 & \textbf{31} & \textbf{31} & 13,154 & 0.1 & 3.4 \\
            mug88\_1 & 2 & 4 & 1,824,581 & 304.3 & - \\
            mulsol.i.1 & \textbf{49} & \textbf{49} & 2,488 & 0.1 & 0.3 \\
            mulsol.i.2 & \textbf{31} & \textbf{31} & 2,612 & 0.1 & 0.3 \\
            mulsol.i.3 & \textbf{31} & \textbf{31} & 2,622 & 0.1 & 0.3 \\
            mulsol.i.4 & \textbf{31} & \textbf{31} & 2,637 & 0.1 & 0.4 \\
            mulsol.i.5 & \textbf{31} & \textbf{31} & 2,650 & 0.1 & 0.3 \\
            myciel3 & \textbf{4} & \textbf{4} & 63 & 0.1 & 0.1 \\
            myciel4 & \textbf{5} & \textbf{5} & 460 & 0.1 & 1.1 \\
            myciel5 & 4.54 & 6 & 6,017 & 0.1 & - \\
            myciel6 & 3.91 & 7 & 263,509 & 1.1 & - \\
            queen5\_5 & \textbf{5} & \textbf{5} & 561 & 0.1 & 0.1 \\
            queen6\_6 & \textbf{7} & \textbf{7} & 2,687 & 0.1 & 0.8 \\
            queen7\_7 & \textbf{7} & \textbf{7} & 13,839 & 0.1 & 6.2 \\
            queen8\_12 & 4 & 13 & 1,710,874 & 47.3 & - \\
            queen8\_8 & 8.44 & 10 & 81,575 & 0.2 & - \\
            queen9\_9 & 5 & 12 & 486,777 & 4.5 & - \\
            r1000.1c & 95.83 & 100 & 1,228,118 & 1.4 & - \\
            r125.1 & \textbf{5} & \textbf{5} & 921 & 0.1 & 0.2 \\
            r125.1c & \textbf{46} & \textbf{46} & 4,008 & 0.1 & 0.3 \\
            r125.5 & \textbf{36} & \textbf{36} & 23,243 & 0.1 & 12.3 \\
            r250.1c & \textbf{64} & \textbf{64} & 20,323 & 0.1 & 1.5 \\
            r250.5 & 65 & 66 & 232,727 & 0.5 & - \\
            zeroin.i.1 & \textbf{49} & \textbf{49} & 2,770 & 0.1 & 0.3 \\
            zeroin.i.2 & \textbf{30} & \textbf{30} & 3,471 & 0.1 & 0.5 \\
            zeroin.i.3 & \textbf{30} & \textbf{30} & 3,458 & 0.1 & 0.5 \\
            \bottomrule
            \multicolumn{3}{c}{Instances/EDDs/Solved} & \multicolumn{2}{c}{137/53/36}\\
            \end{tabular}
        \end{table}

 \bibliographystyle{abbrv}
 \bibliography{references.bib}
\end{document}